\theoremstyle{plain}
\newtheorem{theorem}{Theorem}
\newtheorem{lemma}{Lemma}
\theoremstyle{definition}
\newtheorem{defn}{Definition}
\newtheorem{example}{Example}
\def\rdots{\mathinner{\mkern1mu\raise\p@
\vbox{\kern7\p@\hbox{.}}\mkern2mu
\raise4\p@\hbox{.}\mkern2mu\raise7\p@\hbox{.}\mkern1mu}}
\begin{document}

\title{A Method for Obtaining Generating Functions for Central Coefficients of Triangles}
		 
\date{}
\author{Dmitry Kruchinin\\
Vladimir Kruchinin\\
Tomsk State University of Control Systems and Radioelectronics,\\
Tomsk, Russian Federation\\
KruchininDm@gmail.com}
\maketitle

\begin{abstract}
We present techniques for obtaining a generating function for the central coefficients of a triangle $T(n,k)$, which is given by the expression $[xH(x)]^k=\sum_{n\geqslant k} T(n,k)x^n$, $H(0)\neq 0$.
We also prove certain theorems for solving direct and inverse problems.
\end{abstract}

\section{Introduction}

In \cite{KruCompositae} the second author introduced the notion of the \textit{composita} of a given ordinary generating function $G(x)=\sum_{n>0}g(n)x^n$. 
\begin{defn}
The \textit{composita} is the function of two variables defined by
\begin{equation}
\label{Fnk0}G^{\Delta}(n,k)=\sum_{\pi_k \in C_n}{g(\lambda_1)g(\lambda_2) \cdots g(\lambda_k)},
\end{equation}
where $C_n$ is the set of all compositions of an integer $n$, $\pi_k$ is the composition $\sum_{i=1}^k\lambda_i=n$ into $k$ parts exactly.
\end{defn}
Comtet \cite[ p.\ 141]{Comtet_1974} considered similar objects and identities for exponential generating functions, and called them potential polynomials. In this paper we consider the case of ordinary generating functions.

The generating function of the composita is equal to
\begin{equation}
[G(x)]^k=\sum_{n\geqslant k} G^{\Delta}(n,k)x^n.
\end{equation}

For instance, we obtain the composita of the generating function $G(x,a,b)=ax+bx^2$.

The binomial theorem yields 

$$[G(x,a,b)]^k=x^k(a+bx)^k=x^k\sum_{m=0}^k \binom{k}{m}a^{k-m}b^mx^m. 
$$

Substituting $n$ for $m+k$, we get the following expression:
$$[G(x,a,b)]^k=\sum_{n=k}^{2k} \binom{k}{n-k}a^{2k-n}b^{n-k}x^n=\sum_{n=k}^{2k}G^{\Delta}(n,k,a,b)x^n. 
$$
Therefore, the composita is 
\begin{equation}
\label{Gnkab}
G^{\Delta}(n,k,a,b)=\binom{k}{n-k}a^{2k-n}b^{n-k}. 
\end{equation}

Now we show the compositae of several known generating functions \cite{Comtet_1974, Wilf_1994} in the Table \ref{tab:a}.

\begin{table}[h]
\begin{center}
\setlength\arrayrulewidth{1pt}
\renewcommand{\arraystretch}{1,3}
\begin{tabular}{|ccc|ccc|}
\hline
&\textbf{Generating function $G(x)$}& & & \textbf{Composita $G^{\Delta}(n,k)$}&\\
\hline
&$ax+bx^2$ &&& $a^{2k-n}b^{n-k}\binom{k}{n-k}$&\\ \hline
&$\frac{bx}{1-ax}$ &&&$\binom{n-1}{k-1}a^{n-k}b^k$&\\  \hline
&$\ln(1+x)$ &&& $\frac{k!}{n!}\genfrac{[}{]}{0pt}{}{n}{k}$& \\ \hline
&$e^x-1$  &&& $\frac{k!}{n!}\genfrac{\{}{\}}{0pt}{}{n}{k}$& \\ \hline
\end{tabular}
\caption{Examples of generating functions and their compositae}
\label{tab:a}
\end{center}
\end{table}

The notation $\genfrac{[}{]}{0pt}{}{n}{k}$ are the Stirling numbers of the first kind(see \cite{Comtet_1974,ConcreteMath}). The Stirling numbers of the first kind  count the number of permutations of $n$ elements with $k$ disjoint cycles.

The Stirling numbers of the first kind are defined by the following generating function:
$$
\psi_k(x)=\sum_{n\geq k} \genfrac{[}{]}{0pt}{}{n}{k} \frac{x^n}{n!}=\frac{1}{k!}\ln^k(1+x).
$$

The notation $\genfrac{\{}{\}}{0pt}{}{n}{k}$ are the Stirling numbers of the second kind( see \cite{Comtet_1974,ConcreteMath}). The Stirling numbers of the second kind count the number of ways to partition a set of $n$ elements into $k$ nonempty subsets.

A general formula for the Stirling numbers of the second kind is given as follows:
$$\genfrac{\{}{\}}{0pt}{}{n}{k}=\frac{1}{k!}\sum_{j=0}^k(-1)^{k-j}\binom{k}{j}j^n.$$

The Stirling numbers of the second kind are defined by the generating function
$$
\Phi_k(x)=\sum_{n\geq k} \genfrac{\{}{\}}{0pt}{}{n}{k} \frac{x^n}{n!}=\frac{1}{k!}(e^x-1)^k.
$$

\section{Main results}
In this section we present the main results of the paper:
\begin{itemize}
\item The method of obtaining a generating function for central coefficients of the given triangle (Theorem \ref{thm1}).
\item Inverse problem. The method of obtaining a unique triangle, when we know only  the generating function for the central coefficients of this triangle (Theorem \ref{thm2}).

\end{itemize}
In tabular form, the composita is presented as a triangle as follows:
$$
\begin{array}{ccccccccccc}
&&&&& G_{1,1}^{\Delta}\\
&&&& G_{2,1}^{\Delta} && G_{2,2}^{\Delta}\\
&&& G_{3,1}^{\Delta} && G_{3,2}^{\Delta} && G_{3,3}^{\Delta}\\
&& G_{4,1}^{\Delta} && G_{4,2}^{\Delta} && G_{4,3}^{\Delta} && G_{4,4}^{\Delta}\\
& \rdots && \vdots && \vdots && \vdots && \ddots\\
G_{n,1}^{\Delta} && G_{n,2}^{\Delta} && \ldots && \ldots && G_{n,n-1}^{\Delta} && G_{n,n}^{\Delta}\\
\end{array}
$$

Considering the triangle, we see that the central coefficients are represented by the following sequence:

$$
G_{1,1}^{\Delta},\,G_{3,2}^{\Delta},\,\ldots,\,G_{2n-1,n}^{\Delta},\,\ldots
$$

The generating function for the central coefficients  is given as follows:

$$
F(x)=G_{1,1}^{\Delta}\,+\,G_{3,2}^{\Delta}x^1\,+\,\cdots\,+\,G_{2n-1,n}^{\Delta}x^{n-1}\,+\, \cdots
$$

In the following lemma we give the Lagrange inversion formula, which was proved by Stanley \cite{Stanley_1999}.

\begin{lemma}[The Lagrange inversion formula]
\label{Lagrange formula}
Suppose $H(x)=\sum_{n\geq 0}h(n)x^n$ with $h(0)\neq 0$, and let $A(x)$ be defined by
\begin{equation}
A(x)=xH(A(x)).
\end{equation}
Then
\begin{equation}
n[x^n]A(x)^k=k[x^{n-k}]H(x)^n,
\end{equation}
where $[x^n]A(x)^k$ is the coefficient of $x^n$ in  $A(x)^k$ and  $[x^{n-k}]H(x)^n$ is the coefficient of $x^{n-k}$ in  $H(x)^n$.
\end{lemma}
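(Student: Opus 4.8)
The plan is to prove the formula using formal residues, which is the engine behind Stanley's argument. First I would record that the functional equation $A(x)=xH(A(x))$ together with $h(0)=H(0)\neq 0$ forces $A(0)=0$ and $A'(0)=h(0)\neq 0$, so $A$ is a formal power series of order exactly $1$. Consequently $x\mapsto A(x)$ is a formally invertible substitution, and its compositional inverse is read off directly from the functional equation: setting $w=A(x)$ turns $w=xH(w)$ into $x=w/H(w)$, a series of order $1$ in $w$.

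Next I would translate coefficient extraction into residues. For a formal Laurent series $f$ write $\mathrm{res}_x\,f(x)=[x^{-1}]f(x)$, so that $[x^n]A(x)^k=\mathrm{res}_x\,x^{-n-1}A(x)^k$. The two facts I need are: (i) the residue of a derivative vanishes, $\mathrm{res}_x\,f'(x)=0$, since the coefficient of $x^{-1}$ in $f'$ is always $0$; and (ii) the change-of-variables rule $\mathrm{res}_x\,f(x)=\mathrm{res}_w\,f(x(w))\,x'(w)$, valid whenever $x(w)$ has order $1$.

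Then I would apply the substitution $x=w/H(w)$ to the residue. Using $x^{-n-1}=H(w)^{n+1}w^{-n-1}$, $A(x)^k=w^k$, and $x'(w)=\bigl(H(w)-wH'(w)\bigr)/H(w)^2$, the series whose residue we take collapses to $w^{k-n-1}H(w)^{n}-w^{k-n}H(w)^{n-1}H'(w)$. Reading off the coefficient of $w^{-1}$ gives $[w^{n-k}]H^n-[w^{n-k-1}]H(w)^{n-1}H'(w)$. The key simplification is the identity $H^{n-1}H'=\tfrac1n(H^n)'$, which converts the second term into $\tfrac{n-k}{n}[w^{n-k}]H^n$; subtracting leaves $\tfrac{k}{n}[w^{n-k}]H^n$. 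Renaming $w$ back to $x$ yields $[x^n]A(x)^k=\tfrac{k}{n}[x^{n-k}]H(x)^n$, which is the claim after multiplying through by $n$ (note $n\geq k\geq 1$, so the division is legitimate).

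I expect the main obstacle to be the rigorous justification of step (ii), the change-of-variables rule for formal Laurent series, rather than the algebra, which is routine. One must check both that composition with $x(w)=w/H(w)$ is well defined on the relevant series (it is, because $x(w)$ has order $1$ and $x^{-n-1}A^k$ has a pole of finite order) and that the rule itself holds; the cleanest route is to verify it on the monomials $f(x)=x^m$. For $m\neq -1$ one has $x^m x'=\tfrac{1}{m+1}(x^{m+1})'$, whose residue in $w$ vanishes by fact (i), matching $\mathrm{res}_x\,x^m=0$; the case $m=-1$ reduces to $x'(w)/x(w)=\tfrac{d}{dw}\log x(w)=\tfrac1w+\tfrac{u'}{u}$ with $x(w)=w\,u(w)$, $u(0)\neq 0$, whose residue is $1$, again matching. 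Once this basis check is in place, the computation above goes through verbatim.
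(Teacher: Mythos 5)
Your residue argument is correct and complete in its essentials. Note, however, that the paper itself offers no proof of this lemma at all: it states the Lagrange inversion formula as a known result and simply cites Stanley's \emph{Enumerative Combinatorics 2}, so there is no ``paper's proof'' to compare against line by line. What you have written is, in effect, the standard proof that the citation points to. The algebra checks out: $A(0)=0$ and $A'(0)=h(0)\neq 0$ follow from the functional equation, so $w\mapsto x(w)=w/H(w)$ is a legitimate order-$1$ substitution; the substitution turns $x^{-n-1}A(x)^k\,$ into $w^{k-n-1}H(w)^{n}-w^{k-n}H(w)^{n-1}H'(w)$ after multiplying by $x'(w)$; and the identity $H^{n-1}H'=\tfrac1n(H^n)'$ together with the vanishing of residues of derivatives yields $[x^n]A(x)^k=\tfrac{k}{n}[x^{n-k}]H(x)^n$. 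You also correctly isolate the one point that genuinely requires care --- the change-of-variables rule for residues of formal Laurent series --- and your verification on monomials ($m\neq-1$ via exactness, $m=-1$ via $x'/x=1/w+u'/u$ with $u(0)\neq0$) is the right way to establish it; the extension from monomials to Laurent series with finite-order poles is legitimate because only the finitely many negative-degree monomials can contribute to the coefficient of $w^{-1}$. The only cosmetic caveat is that the identity also holds trivially for $n<k$ (both sides vanish), so the restriction $n\geq k$ you invoke for dividing by $n$ is not needed beyond $n\geq 1$.
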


By using the above Lemma \ref{Lagrange formula}, we now give our main theorems.

\begin{theorem}
\label{thm1} Suppose  $H(x)=\sum_{n\geq 0}h(n)x^n$ is a generating function, where $h(0)\neq 0$, and  $G(x)=xH(x)$ with $[G(x)]^k=\sum_{n\geqslant k} G^{\Delta}(n,k)x^n$, and $A(x)=\sum_{n>0}a(n)x^n$ is the generating function, which is obtained from the functional equation $A(x)=xH(A(x))$. Then the generating function  $F(x)$ for the central coefficients of the triangle $G^{\Delta}(n,k)$ is equal to the first derivative of the function $A(x)$: 
\begin{equation}
F(x)=A'(x)=\sum_{n\geqslant 1} G^{\Delta}(2n-1,n)x^{n-1}.
\end{equation}
\end{theorem}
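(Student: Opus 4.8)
The plan is to express the central coefficients $G^{\Delta}(2n-1,n)$ directly as coefficients of powers of $H(x)$, and then invoke the Lagrange inversion formula (Lemma \ref{Lagrange formula}) to convert these into coefficients of $A(x)$. First I would unwind the definition of the composita: since $[G(x)]^k = [xH(x)]^k = x^k[H(x)]^k = \sum_{n \geqslant k} G^{\Delta}(n,k)x^n$, dividing by $x^k$ gives $[H(x)]^k = \sum_{n \geqslant k} G^{\Delta}(n,k)x^{n-k}$, so that $G^{\Delta}(n,k) = [x^{n-k}][H(x)]^k$. Specializing to the diagonal by substituting $2n-1$ for the first index and $n$ for the second yields the clean identity $G^{\Delta}(2n-1,n) = [x^{n-1}][H(x)]^n$, which is the quantity I must understand.

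Next I would apply Lemma \ref{Lagrange formula}. Written with a fresh dummy variable, the formula reads $m[x^m]A(x)^k = k[x^{m-k}]H(x)^m$. I want the power of $H$ to be $n$ and the extracted coefficient to be $[x^{n-1}]$, so I set $m=n$ and then choose $k$ so that $m-k = n-1$, which forces $k=1$. With these choices the formula collapses to $n[x^n]A(x) = [x^{n-1}]H(x)^n$, that is, $[x^{n-1}][H(x)]^n = n\,a(n)$, since $[x^n]A(x)=a(n)$. Combining this with the previous step gives $G^{\Delta}(2n-1,n) = n\,a(n)$.

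Finally I would identify the generating function. Differentiating $A(x)=\sum_{n>0}a(n)x^n$ termwise gives $A'(x)=\sum_{n\geqslant 1} n\,a(n)\,x^{n-1}$, whose coefficients are exactly the values $n\,a(n)=G^{\Delta}(2n-1,n)$ just computed. Hence $F(x)=\sum_{n\geqslant 1}G^{\Delta}(2n-1,n)x^{n-1}=A'(x)$, as claimed.

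The step I expect to be the only genuine subtlety is the index bookkeeping in the Lagrange formula, where the single letter $n$ plays two roles (the exponent on $H$ and the coefficient index on $A$). One must rename the dummy variables before substituting, and the crucial observation is that requiring the exponent on $H$ to equal its coefficient index plus one forces $k=1$; once that is pinned down, the remainder is a direct termwise comparison of power series. I would also note in passing that the hypothesis $h(0)\neq 0$ guarantees $A(x)=xH(0)+\cdots$ begins at order $x$, so that $A(x)=\sum_{n>0}a(n)x^n$ and the inversion formula applies as stated.
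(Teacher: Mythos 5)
Your proof is correct and follows essentially the same route as the paper: both rest on the identity $G^{\Delta}(2n-1,n)=[x^{n-1}]H(x)^n$ combined with the Lagrange inversion formula, followed by termwise differentiation of $A(x)$. The only difference is cosmetic---you specialize to $k=1$ immediately, whereas the paper first derives the general composita relation $A^{\Delta}(n,k)=\frac{k}{n}G^{\Delta}(2n-k,n)$ and then reads off the $k=1$ case.
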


\begin{proof} According to Lemma \ref{Lagrange formula}, for the solution of the functional equation $A(x)=xH(A(x))$, we can write
$$
n[x^n]A(x)^k=k[x^{n-k}]H(x)^n.
$$

In the left-hand side, there is the composita of the generating function $A(x)$ multiplied by $n$:
$$
n[x^n]A(x)^k=n\,A^{\Delta}(n,k).
$$

We know that
$$
\left( xH(x)\right)^k=\sum_{n\geq k}G^{\Delta}(n,k)x^n.
$$
Then

$$
\left( H(x)\right)^k=\sum_{n\geq k}G^{\Delta}(n,k)x^{n-k}.
$$
If we replace $n-k$ by $m$, we obtain the following expression:

$$
\left( H(x)\right)^k=\sum_{m\geq 0}G^{\Delta}(m+k,k)x^{m}.
$$
Substituting $n$ for $k$ and $n-k$ for $m$, we get
$$
[x^{n-k}]H(x)^n=G^{\Delta}(2n-k,n).
$$
Therefore, we get (cf. \cite{KruCompositae})
\begin{equation}
A^{\Delta}(n,k)=\frac{k}{n}G^{\Delta}(2n-k,n).
\end{equation}

Hence, for solutions of the functional equation $A(x)=xH(A(x))$, we can use the following expression: 
$$
[A(x)]^k=\sum_{n\geqslant k}A^{\Delta}(n,k)x^n=\sum_{n\geqslant k} \frac{k}{n}G^{\Delta}(2n-k,n)x^n.
$$
Therefore,
$$
A(x)=\sum_{n\geqslant 1} \frac{1}{n}G^{\Delta}(2n-1,n)x^n.
$$

Differentiating $A(x)$ with respect to $x$, we obtain the required generating function:

$$
F(x)=A'(x)=\sum_{n\geqslant 1} G^{\Delta}(2n-1,n)x^{n-1}.
$$

The theorem is proved.
\end{proof}

For applications of Theorem \ref{thm1} we give some examples.
\begin{example} Consider the expression that generates Pascal's triangle
$$
[xH(x)]^k=\left(\frac{x}{1-x}\right)^k=\sum_{n\geqslant k} \binom{n-1}{k-1}x^n,
$$
$$
\begin{array}{ccccccccccc}
&&&&& 1\\
&&&& 1 && 1\\
&&& 1 && 2 && 1\\
&& 1 && 3 && 3 && 1\\
& 1 && 4 && 6 && 4  && 1 \\
\end{array}
$$

Let us obtain a generating function for the well-known central binomial coefficients A000984 \cite{oeis,Lehmer}:
$$
1, 2, 6, 20, 70, 252, 924, 3432, 12870, 48620, \ldots
$$

The $n^\text{th}$ central binomial coefficient is defined by
$$\binom{2n}{n}.$$ 

The functional equation has the form
$$
A(x)=\frac{x}{1-A(x)}.
$$

The solution of this equation is
$$
A(x)=\frac{1-\sqrt{1-4x}}{2}.
$$

Now we compute the derivative of $A(x)$. Therefore, we obtain the generating function for the central binomial coefficients:
\begin{equation}
A'(x)=\frac{1}{\sqrt{1-4x}}
\end{equation}

See A000984 \cite{oeis}.
\end{example}

\begin{example} Let us consider the generating 
function $H(x)=\frac{1-x}{1-2x}$. Then
$$
[xH(x)]^k=\sum_{n\geqslant k} G^{\Delta}(n,k)x^n,
$$
where
$$
G^{\Delta}(n,k)=\sum_{i=0}^{n-k}{\left(-2\right)^{i}\, \binom{k}{n-k-i}\, \binom{k+i+1}{k-1}\,\left(-1\right)^{n-k-i}}.
$$
The composita $G^{\Delta}(n,k)$ defines the following triangle A105306 \cite{oeis}
$$
\begin{array}{ccccccccccc}
&&&&& 1\\
&&&& 1 && 1\\
&&& 2 && 2 && 1\\
&& 4 && 5 && 3 && 1\\
& 8 && 12 && 9 && 4  && 1 \\
\end{array}
$$

Let us now compute the generating function for the central coefficients of this triangle. For this purpose we solve the following functional equation:
$$
A(x)=x\frac{1-A(x)}{1-2A(x)}.
$$

Therefore,
$$
A(x)=\frac{1+x-\sqrt{1-6x+x^2}}{4}
$$
which generates the sequence A001003 \cite{oeis}
$$x+x^2+3\,x^3+11\,x^4+45\,x^5+197\,x^6+903\,x^7+\cdots $$

Now we compute the derivative of $A(x)$. Therefore, we obtain the generating function for the central coefficients:
\begin{equation}
A'(x)=\frac{1}{4}-\frac{x-3}{4\,\sqrt{x^2-6\,x+1}}
\end{equation}

This is the sequence A176479 \cite{oeis}
$$1+2\,x+9\,x^2+44\,x^3+225\,x^4+1182\,x^5+\cdots $$
\end{example}

\begin{example} Let us consider the generating 
function $H(x)=x\cot(x)$. Then $[xH(x)]^k=[x^2\cot(x)]^k$ defines the following triangle A199542 \cite{oeis}

$$G^{\Delta}(n,k)=2^{n-2\,k}\,\left(-1\right)^{\frac{n-k}{2}}\,\sum_{l=0}^{k} 2^l\,l!\, \binom{k}{l}\,
\sum_{m=0}^{n-2\,k+l}\frac{m!\,
\genfrac{[}{]}{0pt}{}{l+m}{l}\,\genfrac{\{}{\}}{0pt}{}{n-2k+l}{m}}{\left(l+m\right)!(n-2\,k+l)!},$$

$$
\begin{array}{ccccccccccc}
&&&&& 1\\
&&&& 0 && 1\\
&&& -\frac{1}{3} && 0 && 1\\
&& 0 && -\frac{2}{3} && 0 && 1\\
& -\frac{1}{45} && 0 && -1 && 0  && 1 \\
\end{array}
$$

The functional equation has the form

$$
A(x)=xA(x)\cot(A(x)).
$$

Then
$$
A(x)=\arctan(x).
$$

Therefore, the generating function for the central coefficients of the triangle A199542 \cite{oeis} is
\begin{equation}
A'(x)=\frac{1}{1+x^2}.
\end{equation}
\end{example}

Now we are ready to consider the inverse problem. We know the generating function for the central coefficients, and we need to find a triangle with given central coefficients.

First we consider the notion of \textit{reciprocal generating functions} \cite{Wilf_1994}.
\begin{defn}
Reciprocal generating functions are functions that satisfy the condition:
$$
H(x)B(x)=1. 
$$
\end{defn}

In the following lemma we give the formula of the composita of a reciprocal function, which was proved in the paper \cite{KruCompositae}.

\begin{lemma}
\label{recip_Theorem}
Suppose  $H(x)=\sum_{n\geq 0}h(n)x^n$ is a generating function, where $h(0)=1$, and $B(x)=\sum_{n\geq 0}b(n)x^n$ is the reciprocal generating function, where $b(0)=1$, and $B_x^{\Delta}(n,m)$ is the composita of $xB(x)$. Then the composita of the function $xH(x)$ is equal to

\begin{equation}
H_x^{\Delta}(n,k)=
\begin{cases}
1, & if \text{~}\text{~} n=k;\\
\sum\limits_{m=1}^{n-k}(-1)^{m}\binom{k+m-1}{k-1}
 \sum\limits_{j=1}^{m}(-1)^{j}\binom{m}{j}B_x^{\Delta}(n-k+j,j), & if \text{~}\text{~} n>k.
\end{cases}
\label{Reciprocal}
\end{equation}

\end{lemma}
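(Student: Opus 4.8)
The plan is to reduce the claim to a single coefficient-extraction problem and then attack it with the negative binomial series. Applying the definition of the composita to $xH(x)$, we have $[xH(x)]^k = x^k H(x)^k = \sum_{n\geq k} H_x^{\Delta}(n,k)x^n$, so $H_x^{\Delta}(n,k) = [x^{n-k}]H(x)^k$. The reciprocity hypothesis $H(x)B(x)=1$ converts this into $H(x)^k = B(x)^{-k}$, and since $b(0)=1$ we may write $B(x) = 1 + \bar B(x)$ with $\bar B(x)=\sum_{n\geq 1}b(n)x^n$ having no constant term. Thus the entire task becomes the evaluation of $[x^{n-k}]B(x)^{-k}$.

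First I would expand $B(x)^{-k} = (1+\bar B(x))^{-k}$ by the generalized binomial series, obtaining $B(x)^{-k} = \sum_{m\geq 0}\binom{-k}{m}\bar B(x)^m = \sum_{m\geq 0}(-1)^m\binom{k+m-1}{k-1}\bar B(x)^m$. Because $\bar B(x)$ begins at $x^1$, the power $\bar B(x)^m$ begins at $x^m$, so only the finitely many indices $m\leq n-k$ can contribute to the coefficient of $x^{n-k}$. This observation simultaneously legitimizes treating the expansion as a finite sum and explains both the upper limit $n-k$ and, for $n>k$, the lower limit $m\geq 1$. The base case $n=k$ then drops out at once: only $m=0$ survives, and with $\binom{k-1}{k-1}=1$ it gives $H_x^{\Delta}(k,k)=1$.

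Next I would re-expand each power $\bar B(x)^m = (B(x)-1)^m$ by the ordinary binomial theorem as a linear combination of the genuine powers $B(x)^j$, and then feed in the composita of $xB(x)$. The key identification is that $[xB(x)]^j = x^j B(x)^j = \sum_{n\geq j}B_x^{\Delta}(n,j)x^n$ yields $[x^p]B(x)^j = B_x^{\Delta}(p+j,j)$; with $p=n-k$ this turns each $[x^{n-k}]B(x)^j$ into exactly $B_x^{\Delta}(n-k+j,j)$, the quantity on the right-hand side. For $n>k$ the $j=0$ contributions vanish, since a strictly positive power of $x$ has no constant term, and this is what forces the inner summation to start at $j\geq 1$.

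The main obstacle is the sign and index bookkeeping across the two nested binomial expansions. One must combine the sign $(-1)^m$ coming from the negative binomial coefficient $\binom{-k}{m}$ with the sign produced by expanding $(B(x)-1)^m$, extract the coefficient of $x^{n-k}$, interchange the order of the two summations (legitimate by the finiteness noted above), and relabel the indices so that the result lines up with the stated double sum over $1\leq m\leq n-k$ and $1\leq j\leq m$. Reconciling these signs against the claimed closed form is the step demanding the most care, and it is exactly the point where a careful verification on a small example (for instance $H(x)=1/(1-x)$, $B(x)=1-x$, whose composita $H_x^{\Delta}(n,k)=\binom{n-1}{k-1}$ is known) is worth running to pin down the coefficients unambiguously.
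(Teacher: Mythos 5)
The paper does not actually prove this lemma---it only cites the preprint \cite{KruCompositae} for the proof---so there is no in-text argument to compare yours against. On its own terms your strategy (write $H(x)^k=B(x)^{-k}=(1+\bar{B}(x))^{-k}$, expand by the negative binomial series, re-expand $\bar{B}(x)^m=(B(x)-1)^m$, and read off coefficients via $[x^p]B(x)^j=B_x^{\Delta}(p+j,j)$) is sound, and every structural feature of the claimed formula---the limits $1\leq m\leq n-k$ and $1\leq j\leq m$, the vanishing of the $m=0$ and $j=0$ terms for $n>k$, the case $n=k$---comes out exactly as you describe. The problem is that the one step you defer, the sign bookkeeping, is not a formality. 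Carrying it out, the factor $(-1)^m$ from $\binom{-k}{m}=(-1)^m\binom{k+m-1}{k-1}$ meets the factor $(-1)^{m-j}$ from expanding $(B(x)-1)^m$, and their product is $(-1)^{2m-j}=(-1)^j$. The identity your argument actually yields for $n>k$ is therefore
$$
H_x^{\Delta}(n,k)=\sum_{m=1}^{n-k}\binom{k+m-1}{k-1}\sum_{j=1}^{m}(-1)^{j}\binom{m}{j}B_x^{\Delta}(n-k+j,j),
$$
with \emph{no} outer factor $(-1)^m$, which differs from the lemma as printed.

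Your own proposed sanity check settles which version is correct. Take $H(x)=1/(1-x)$, $B(x)=1-x$, $n=2$, $k=1$: then $B_x^{\Delta}(2,1)=-1$ (since $xB(x)=x-x^2$), and the printed formula gives $(-1)^1\binom{1}{0}\cdot(-1)^1\binom{1}{1}\cdot(-1)=-1$, whereas the true value is $H_x^{\Delta}(2,1)=\binom{1}{0}=1$; the version without $(-1)^m$ gives $+1$. Note that the paper itself silently drops the $(-1)^m$ when it reuses ``formula (\ref{Reciprocal})'' in the example following Theorem \ref{thm2}, so the displayed lemma contains a sign typo. In short: finish the three-line computation you outlined and you will have a complete and correct proof---but of the corrected statement, not of the statement as displayed, and you should say so explicitly rather than leaving the reconciliation of signs as an open item.
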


By using the above Lemma \ref{recip_Theorem}, we are now able to prove the following theorem.

\begin{theorem}
\label{thm2}
Suppose  $F(x)=\sum_{n\geq 0}f(n)x^n$ is a generating function, where $F(0)\neq 0$. Then there exists a unique generating function $H(x)=\sum_{n\geq 0}h(n)x^n$ such that the triangle $G^\Delta(n,k)$ defined below has the central coefficients $f(n)$, 
$$
[xH(x)]^k=\sum_{n\geqslant k} G^\Delta(n,k)x^n.
$$
\end{theorem}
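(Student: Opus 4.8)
The plan is to invert the construction of Theorem~\ref{thm1}.  Theorem~\ref{thm1} tells us that for any admissible $H$, the central-coefficient generating function satisfies $F(x)=A'(x)$, where $A(x)=xH(A(x))$ is the Lagrange-inverse series.  So given the target $F$ with $F(0)\neq 0$, I would first recover the candidate $A$ by integrating: set $A(x)=\int_0^x F(t)\,dt=\sum_{n\geq 1}\frac{f(n-1)}{n}x^n$.  Since $f(0)=F(0)\neq 0$, this $A$ has the form $A(x)=a(1)x+a(2)x^2+\cdots$ with $a(1)=f(0)\neq 0$, so $A$ is a formal power series with zero constant term and invertible linear coefficient; hence $A$ admits a compositional inverse $A^{\langle-1\rangle}$.

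The second step is to extract $H$ from $A$.  The functional equation $A(x)=xH(A(x))$ can be rewritten as $H(A(x))=A(x)/x$; substituting $x\mapsto A^{\langle-1\rangle}(x)$ gives the explicit formula
\begin{equation}
H(x)=\frac{x}{A^{\langle-1\rangle}(x)}.
\end{equation}
I would check that this $H$ is a well-defined formal power series with $H(0)\neq 0$: writing $A^{\langle-1\rangle}(x)=x/a(1)+\cdots$, the ratio $x/A^{\langle-1\rangle}(x)$ has constant term $a(1)=f(0)\neq 0$ and no negative powers, so $H(x)=\sum_{n\geq 0}h(n)x^n$ with $h(0)=f(0)$ as required.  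This $H$ is forced by $A$, and $A$ is forced by $F$, which will give uniqueness.

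The third step is to verify that this $H$ actually produces the desired central coefficients, closing the loop.  With $H$ defined as above, the series $A$ we started from is by construction the solution of $A(x)=xH(A(x))$, so Theorem~\ref{thm1} applies directly and yields that the central coefficients of the triangle $G^\Delta(n,k)$ associated to $[xH(x)]^k$ are generated by $A'(x)=F(x)$; comparing coefficients gives $G^\Delta(2n-1,n)=f(n-1)$ for all $n\geq 1$, which is exactly the claim.  For uniqueness, I would argue that any admissible $H$ realizing $F$ as its central-coefficient function must, by Theorem~\ref{thm1}, have its Lagrange series $\tilde A$ satisfy $\tilde A'=F$ and $\tilde A(0)=0$, forcing $\tilde A=A$; then the functional equation determines $H$ uniquely via the same $H(x)=x/A^{\langle-1\rangle}(x)$.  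Here is where Lemma~\ref{recip_Theorem} is presumably intended to enter: rather than inverting $A$ abstractly, one can write $H$ concretely through its reciprocal $B=1/H$ and use the composita formula~\eqref{Reciprocal} to express the coefficients $h(n)$ explicitly in terms of the $f(n)$, thereby making the existence constructive.

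The main obstacle I anticipate is purely formal-analytic bookkeeping rather than conceptual: I must confirm at each step that the series manipulations (integration, compositional inversion, and the reciprocal $x/A^{\langle-1\rangle}$) stay within the ring of formal power series with the correct constant-term conditions, which hinges entirely on the hypothesis $F(0)\neq 0$ guaranteeing $a(1)\neq 0$ so that $A^{\langle-1\rangle}$ exists.  If one instead follows the reciprocal-function route, the harder part becomes showing that the recursive system relating $h(n)$ to the given $f(n)$ via~\eqref{Reciprocal} is triangular and uniquely solvable, which again reduces to the invertibility granted by $f(0)\neq 0$.
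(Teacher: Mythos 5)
Your proposal is correct and follows essentially the same route as the paper: integrate $F$ to obtain $A$ with $A(0)=0$, then solve $A(x)=xH(A(x))$ by setting $H(x)=x/A^{\langle-1\rangle}(x)$, with Lemma~\ref{recip_Theorem} and the composita machinery serving only to make that inversion computationally explicit. If anything, your write-up is more complete on the logical side, since you explicitly check $h(0)=f(0)\neq 0$, close the loop by applying Theorem~\ref{thm1} to the constructed $H$, and give the uniqueness argument, all of which the paper leaves implicit.
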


\begin{proof}  To prove this theorem, we consider the proof of Theorem \ref{thm1} in the reverse order. 

Integrating $F(x)$ in $x$, we obtain the generating function $A(x)$
$$
A(x)=\int F(x)dx=\sum_{n>0 } f(n-1) \frac{x^n}{n},
$$ 
where $a(0)=0$.

To find $H(x)$, we solve the following inverse functional equation
\begin{equation}
A(x)=xH(A(x)).
\end{equation}

Suppose $A(x)=t$, then $x=A^{-1}(t)$. And we get

$$
t=A^{-1}(t)H(t)
$$
or
\begin{equation}
H(t)=\frac{t}{A^{-1}(t)}.
\end{equation}

We solve this equation using the notion of compositae.

The generating function $B(x)=\frac{A^{-1}(t)}{t}$ is the reciprocal generating function with respect to $H(t)$
$$
H(t)\frac{A^{-1}(t)}{t}=1.
$$

According to Lemma \ref{recip_Theorem}, the composita of $tH(t)$ is equal to

$$
G^{\Delta}(n,k)=\begin{cases}
1, & if \text{~}\text{~} n=k;\\
\sum\limits_{m=1}^{n-k}(-1)^{m}\binom{k+m-1}{k-1}
 \sum\limits_{j=1}^{m}(-1)^{j}\binom{m}{j}B_t^{\Delta}(n-k+j,j), & if \text{~}\text{~} n>k.
\end{cases}
$$

In our case, the composita $B_t^{\Delta}(n,k)$ is equal to the composita of $A^{-1}(t)$.

For obtaining the composita of $A^{-1}(t)$, we use the following algorithm:

\begin{enumerate}
\item calculate the composita $A^\Delta(n,k)$ of the generating function $A(x)$;
\item using formula (\ref{Reciprocal}), calculate the reciprocal composita $A^\Delta_R(n,k)$  of the composita of $A^\Delta(x)$;
\item using  the reciprocal composita  $A^{\Delta}_R(n,k)$, calculate the composita of  the inverse function $A^{-1}(t)$
$$
A_{Inv}^{\Delta}(n,k)=\frac{k}{n}A_R^{\Delta}(2n-k,n).
$$
\end{enumerate}

Therefore,
$$H(x)=\sum_{n\geqslant 1} G^\Delta(n,1)x^{n-1}.$$

The theorem is proved.
\end{proof}

Using Theorem \ref{thm2}, we can obtain a triangle $T(n,k)$ given by the generating function $H(x)$, $h(0)\neq 0$ such that 
$$
[xH(x))]^k=\sum\limits_{n\geqslant k} T(n,k)x^n,
$$
when we know only the central coefficients of the triangle.

Now we give the following example. We find a triangle $T(n,k)$ such that a central coefficients are 
the Catalan numbers.

\begin{example} The generating function for the Catalan numbers is defined by A000108 \cite{oeis}
$$
F(x)=\frac{1-\sqrt{1-4x}}{2x}.
$$

Let us obtain the required triangle and its expression.
First we compute the integral of $F(x)$
$$
\int \frac{1-\sqrt{1-4x}}{2x}dx= \log\left(\sqrt{1-4\,x}+1\right)-\sqrt{1-4\,x}.
$$

The first elements of the obtained generating function are shown below:
$$-1+\log 2+x+{\frac{x^2}{2}}+{\frac{2\,x^3}{3}}+{\frac{5\,x^4}{4}}+
 {\frac{14\,x^5}{5}}+7\,x^6+{\frac{132\,x^7}{7}}+\cdots $$
 
Since $a(0)=0$, we have
$$
A(x)=1-\log(2)+\log\left(\sqrt{1-4\,x}+1\right)-\sqrt{1-4\,x}=
$$
$$
=\log\left(1-\frac{1-\sqrt{1-4\,x}}{2}\right)+2\left(\frac{1-\sqrt{1-4\,x}}{2}\right).
$$

Now we obtain the composita of $A(x)$.
$$
A(x)=\log(1-C(x))+2C(x),
$$
where 
$$C(x)=\frac{1-\sqrt{1-4\,x}}{2}.$$

The composita of $\log(1-x)+2x$ is 
$$\sum_{j=0}^{k}{{{\left(-1\right)^{n-j}\,2^{j}\,\binom{k}{j}\,\frac{\left(k-j\right)!}{{\left(n
 -j\right)!}}\, \genfrac{[}{]}{0pt}{}{n-j}{k-j}}}}.$$
 
The composita of $C(x)$ is
$${\frac{k}{n}\, \binom{2\,n-k-1}{n-1}}.$$

Therefore, the composita of $A(x)$ is equal to the product of compositae $C(x)$ and $\log(1-x)+2x$
$$
A^{\Delta}(n,k)=\sum_{m=k}^n {{\frac{m}{n}\,\binom{2\,n-m-1}{n-1}}} \sum_{j=0}^{k}{{{\left(-1\right)^{m-j}\,2^{j}\,\binom{k}{j}\,\frac{\left(k-j\right)!}{{\left(m
 -j\right)!}}\,\genfrac{[}{]}{0pt}{}{m-j}{k-j}}}}.
$$

Using formula (\ref{Reciprocal}), the reciprocal composita of the composita of $A(x)$ is
$$
A_R^{\Delta}(n,k)=\begin{cases}
1,& if \text{~}\text{~} n=k;\\
\sum\limits_{m=1}^{n-k}\binom{k+m-1}{k-1}\,\sum\limits_{j=1}^{m}(-1)^{j}\,\binom{m}{j}\,A^{\Delta}(n-k+j,j),& if \text{~}\text{~} n>k.
\end{cases}
$$

Using  the reciprocal composita  $A_R^{\Delta}(n,k)$, we obtain the composita of  the inverse function $[A(x)]^{-1}$
$$
A_{Inv}^{\Delta}(n,k)=\frac{k}{n}A^{\Delta}(2n-k,n).
$$

Therefore, we obtain the reciprocal composita of the composita of the inverse function
$$
G^{\Delta}(n,k)=\begin{cases}
1,& if \text{~}\text{~} n=k;\\
\sum\limits_{m=1}^{n-k}\binom{k+m-1}{k-1}\,\sum\limits_{j=1}^{m}(-1)^{j}\,\binom{m}{j}\,A_{Inv}^{\Delta}(n-k+j,j),& if \text{~}\text{~} n>k.
\end{cases}
$$
This is the required composita.

The required triangle has the following form
$$
\begin{array}{ccccccccccccccccc}
&&&&&&&& 1 \\
&&&&&&&\frac{1}{2} && 1 \\
&&&&&&\frac{5}{12}&&  1 &&  1  \\
&&&&& \frac{1}{2} && \frac{13}{12} && \frac{3}{2} && 1\\  
&&&&\frac{551}{720} && \frac{17}{12} && 2 && 2 && 1  \\
&&&\frac{11}{8} && \frac{529}{240} && \frac{23}{8} && \frac{19}{6} && \frac{5}{2} && 1\\ 
&&\frac{16657}{6048} && \frac{2831}{720} && \frac{1111}{240} && 5 && \frac{55}{12} && 3 && 1\\  
&\frac{4289}{720} && \frac{46999}{6048} && \frac{1329}{160} && \frac{6059}{720} && \frac{95}{12} && \frac{25}{4} && \frac{7}{2} && 1\\  
\frac{16491599}{1209600} && \frac{501353}{30240} && \frac{246787}{15120} && \frac{1841}{120} && 14 && \frac{47}{4} && \frac{49}{6} && 4 && 1\\  
 \end{array}
 $$

\end{example}

Concerned with sequences A000108, A000984, A001003, A105306, A176479 and A199542.

2000 Mathematics Subject Classification: 
Primary 05A15; Secondary 11B75, 05A10.

Keywords: generating function, central coefficients, triangle, composita.
\end{document}